\documentclass[12pt]{article}
\usepackage{amsmath,amsthm,amssymb}
\usepackage{amssymb,latexsym}

\newtheorem{theorem}{Theorem}

\newtheorem{lemma}{Lemma}

\textheight=21.5cm
\textwidth=16cm
\hoffset=-1cm
\parindent=16pt

\begin{document}

\baselineskip=17pt

\title{\bf A quaternary diophantine inequality by prime numbers of a special type}

\author{\bf S. I. Dimitrov}
\date{2017}
\maketitle
\begin{abstract}
Let $1<c<832/825$. For large real numbers $N>0$ and a small constant $\vartheta>0$,
the inequality
\begin{equation*}
|p_1^c+p_2^c+p_3^c+p_4^c-N|<\vartheta
\end{equation*}
has a solution in prime numbers $p_1,\,p_2,\,p_3,\,p_4$ such that, for each $i\in\{1,2,3,4\}$,
$p_i+2$ has at most $32$ prime factors.

\medskip

{\bf Keywords}:
Rosser's weights, vector sieve, circle method, almost primes,
diophantine inequality.\\[2pt]
{\bf  2000 Math.\ Subject Classification}.  11D75, 11N36, 11P32.
\par
\end{abstract}

\section{Introduction and statements of the result.}
\indent

In 1952 I. I. Piatetski-Shapiro \cite{Shapiro} investigated the inequality
\begin{equation}\label{Shapiro}
|p_1^c+p_2^c+\cdot\cdot\cdot+p_r^c-N|<\varepsilon
\end{equation}
where $c>1$ is not an integer, $\varepsilon$ is a fixed small positive number, and
$p_1,...,p_r$ are primes. He proved the existence of an $H(c)$, depending only on
$c$, such that for all sufficiently large real $N$, (\ref{Shapiro}) has a solution for $H(c)\leq r$.
He established that
\begin{equation*}
\limsup\limits_{c\rightarrow\infty}\frac{H(c)}{c\log c}\leq4
\end{equation*}
and also that $H(c)\leq5$ if $1<c<3/2$.

In 1992 Tolev \cite{Tolev1} proved that (\ref{Shapiro}) has a solution for $r=3$ and $1<c<15/14$.
The interval $1<c<15/14$ was subsequently improved by several authors
\cite{Cai}, \cite{Ku-Ne}, \cite{Kumchev}, \cite{Baker-Weingartner}.

In 2003 Zhai and Cao \cite{Zhai-Cao} proved that (\ref{Shapiro}) has a solution for $r=4$ and $1<c<81/68$.
Their result  was improved to $1<c<97/81$ by Mu \cite{Mu}.

In  2016 Dimitrov \cite{Dimitrov} showed that (\ref{Shapiro}) has a solution
for $r=3$, $0<c<4/21$ and primes $p_1,\,p_2,\,p_3$ such that, for each $i\in\{1,2,3\}$,
$p_i+2$ has at most 10 prime factors.

Recently Tolev \cite{Tolev2} proved that (\ref{Shapiro}) has a solution
for $r=3$, $1<c<15/14$ and primes $p_1,\,p_2,\,p_3$ such that, for each $i\in\{1,2,3\}$,
$p_i+2$ has at most $\left[\frac{369}{180-168c}\right]$ prime factors.

Let $P_l$ is a number with at most $l$ prime factors.
Motivated by \cite{Tolev2}, we shall prove the following theorem.
\begin{theorem} Let $A$ be an arbitrary large and fixed, and let $1<c<832/825$.
There exists a number $N_{0}(c)>0$ such
that for each real number $N > N_{0}(c)  $ the inequality
\begin{equation*}
 |p_1^c+p_2^c+p_3^c+p_4^c-N|<\frac{1}{(\log N)^A}
\end{equation*}
has a solution in prime numbers $p_1,\,p_2,\,p_3$  such that
\[
p_1+2=P_{32}^{'}\,,\quad p_2+2=P_{32}^{''}\,,\quad p_3+2=P_{32}^{'''}\,,\quad p_4+2=P_{32}^{''''}\,.
\]
\end{theorem}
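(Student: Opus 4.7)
The plan is to combine the Davenport--Heilbronn form of the circle method with a four-dimensional vector sieve carrying Rosser's weights, adapting the three-prime approach of Tolev \cite{Tolev2}. Choose $X$ with $4(2X)^c\asymp N$ and a smooth even kernel $K(\alpha)$ whose Fourier transform $\widehat K$ is nonnegative, essentially supported in $|y|\leq \vartheta=(\log N)^{-A}$, and satisfies $\widehat K(y)\gg\vartheta$ on $|y|\leq\vartheta/2$. Then any admissible prime quadruple $(p_1,\ldots,p_4)\in (X,2X]^4$ with $|\sum p_i^c-N|<\vartheta/2$ contributes positively to
\[
\Gamma(N)=\int_{\mathbb R}\prod_{i=1}^{4}S_i(\alpha)\cdot K(\alpha)\,e(-\alpha N)\,d\alpha,
\]
where $S_i(\alpha)=\sum_{X<p\leq 2X}\omega_i(p)\,e(\alpha p^c)$ and $\omega_i(p)$ is a nonnegative weight supported on primes $p$ for which $p+2$ has at most $32$ prime factors. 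Proving $\Gamma(N)>0$ yields the theorem.

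To construct the $\omega_i$ I would use the Br\"udern--Fouvry vector sieve: for each coordinate, truncate Rosser's linear sieve at level $D=X^\theta$ to obtain upper and lower comparison functions $\chi_i^\pm$, and use the elementary inequality
\[
\prod_{i=1}^{4}\chi_i\;\geq\;\sum_{i=1}^{4}\chi_i^-\prod_{j\neq i}\chi_j^+\;-\;3\prod_{i=1}^{4}\chi_i^+,
\]
which reduces $\Gamma(N)$ to a bounded linear combination of integrals
\[
\int_{\mathbb R}\prod_{i=1}^{4}T_{d_i}(\alpha)\,K(\alpha)\,e(-\alpha N)\,d\alpha,\qquad T_d(\alpha)=\sum_{\substack{X<p\leq 2X\\ p\equiv -2\,(d)}}(\log p)\,e(\alpha p^c),
\]
with $d_i\leq D$. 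Partition $\mathbb R$ into a major arc $|\alpha|\leq\tau$, a minor arc $\tau<|\alpha|\leq T$, and a tail $|\alpha|>T$ controlled by the rapid decay of $K$.

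On the major arc I would approximate $T_d(\alpha)$ by $\rho(d)(\log X)^{-1}\int_X^{2X}e(\alpha t^c)\,dt$ via a Siegel--Walfisz estimate, valid uniformly for $d\leq D$ since $\theta$ will be taken much smaller than $1$. Assembling the four factors and feeding the surviving divisor sum into the Jurkat--Richert theorem for the linear sieve produces a positive main term of order $\vartheta\, X^{4-c}(\log X)^{-4}$ proportional to the singular integral and a positive sieve constant. The contributions from the tail and from the major-arc error terms are routine given the Fourier decay of $K$.

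The crux is the minor arc. By H\"older's inequality the contribution is controlled by $\sup_{\tau<|\alpha|\leq T}|T_d(\alpha)|$ together with an $L^2$ average of $|S(\alpha)|^2 K(\alpha)$. The required Weyl-type bound $\sup|T_d(\alpha)|\ll X^{1-\delta}$ is extracted from a Vaughan (or Heath--Brown) decomposition of $\Lambda(n)$ combined with van der Corput's $A$- and $B$-processes applied to the type I/II sums in $e(\alpha n^c)$, uniformly in $d\leq D$. The admissible range $1<c<832/825$ and the almost-prime parameter $l=32$ arise from the simultaneous optimization of three quantities: the saving $\delta$ in the Weyl bound (which depends on $c$), the sieve level $\theta$ (which must be compatible with the equidistribution used on the major arc), and the Iwaniec--Richert relation tying $l$ to $\theta^{-1}$. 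This three-way balance, rather than any individual estimate, is the main technical obstacle.
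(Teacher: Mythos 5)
Your proposal follows essentially the same path as the paper: smoothed Davenport--Heilbronn circle method, four-fold Br\"udern--Fouvry vector sieve inequality
$\Lambda_1\Lambda_2\Lambda_3\Lambda_4\ge\sum_i \Lambda_i^-\prod_{j\ne i}\Lambda_j^+ - 3\prod_i\Lambda_i^+$,
the same three-way split into major arc, intermediate range, and tail, Siegel--Walfisz for the major-arc approximation of the $T_d$, and a Weyl-type sup bound for the intermediate range proved by Vaughan-type decomposition plus van der Corput. The only substantive issue is the minor-arc bookkeeping. After pulling one factor out by $\sup_{\tau<|\alpha|\le K}|T_d|$, you are left with an integral of the third power $\int_\tau^K|\Theta(t)||T(t)|^3\,dt$, and a single $L^2$ average of $|T|^2$ against the kernel does not close this. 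The paper applies Cauchy's inequality to split $|T|^3$ as $|T|\cdot|T|^2$, hence needs both $\int|\Theta||T|^2\ll X\log^6X$ (as in the ternary case) and a genuine fourth-moment bound $\int_0^1|T|^4\ll X^{4-c+\eta}$; the latter is the new ingredient driving the four-variable result, and it rests on Zhai--Cao's estimate
\[
\sum_{X/2<n_1,n_2,n_3,n_4\le X}\min\Bigl(1,\,\frac{1}{|n_1^c+n_2^c-n_3^c-n_4^c|}\Bigr)\ll X^{4-c}\log^5X
\]
together with a divisor-bound step to absorb the $d_i\le D$ summations. Your sketch does not isolate this fourth-moment estimate, and replacing it by just an $L^2$ average would not give a saving over the trivial bound. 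With that single ingredient added, your outline matches the paper's proof; the $1<c<832/825$ range and $P_{32}$ then come out of exactly the parameter balance ($\tau$, sieve level $D=X^{1/11-\varepsilon_0}$, and $s=\log D/\log z\approx2.95$, so $\beta^{-1}\approx32$) that you describe qualitatively.
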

By choosing the parameters in a different way we may obtain other similar results, for example
$1<c<51/50$\,,\, $p_i+2=P_r\,,i=1,2,3,4$, where $r$ is large. Obviously the enlargement of the range for $c$ leads to
increase of the number of the prime factors of $p_i+2$.

\section{Notations and some lemmas.}
\indent

As usual $\varphi(n)$ and $\mu(n)$ denote respectively, Euler's function and M\"{o}bius'
function. We denote by $\tau(n)$ the number of the positive divisors of $n$.
Let $(m_1,m_2)$ be the greatest common divisor. Instead of $m\equiv n\,\pmod {k}$
we write for simplicity $m\equiv n\,(k)$. As usual $[y]$ denotes
the integer part of $y$, $e(y)=e^{2\pi \imath y}$.
Let $c$ be a fixed real number such that $1<c<832/825$ and $N$ be a sufficiently large number.
\begin{align}
\label{X}
&X =(N/3)^{1/c}\,;\\
\label{tau}
&\tau=X^{57/275-c}\,;\\
\label{eps}
&\vartheta=\frac{1}{(\log X)^{A+1}}\,,\;\;A>20\hbox{ is arbitrary large}\,;\\
\label{K}
&K=\frac{\log^2X}{\vartheta}\,;\\
\label{D}
&D=X^{1/11-\varepsilon_0}\,,\;\;\varepsilon_0=0.001\,;\\
\label{eta}
&\eta=\frac{\varepsilon_0}{9}\,;\\
\label{z}
&z=X^\beta,\;\;0<\beta<1/33\,;\\
\label{notPz}
&P(z)=\prod\limits_{2<p\le z}p\,,\,\ \mbox{$p$ -prime number}\,;\\
\label{I}
&I(\alpha)=\int\limits_{X/2}^{X}e(\alpha t^c)dt\,.
\end{align}
The value of $\beta$  will be specified latter.

Let $\lambda ^{\pm}(d)$ be the lower and upper bounds Rosser's weights of level
$D$, hence
\begin{equation}\label{Rosser}
|\lambda ^{\pm}(d)|\leq1,\quad\lambda ^{\pm}(d)=0\;\;  \mbox{  if }\;\; d \geq D \;\;\mbox{ or }\;\; \mu(d)=0\,.
\end{equation}
For further properties of Rosser's weights we refer to \cite{Iwa1}, \cite{Iwa2}.
\begin{lemma}\label{Fourier}Let $\vartheta\in \mathbb{R}$ and $k\in \mathbb{N}$.
There exists a function $\theta(y)$ which is $k$ times continuously differentiable and
such that
\begin{align*}
  &\theta(y)=1\quad\quad\quad\mbox{for }\quad\quad|y|\leq 3\vartheta/4\,;\\[6pt]
  &0\leq\theta(y)<1\quad\mbox{for}\quad3\vartheta/4 <|y|< \vartheta\,;\\[6pt]
  &\theta(y)=0\quad\quad\quad\mbox{for}\quad\quad|y|\geq \vartheta\,.
\end{align*}
and its Fourier transform
\begin{equation*}
\Theta(x)=\int\limits_{-\infty}^{\infty}\theta(y)e(-xy)dy
\end{equation*}
satisfies the inequality
\begin{equation*}
|\Theta(x)|\leq\min\bigg(\frac{7\vartheta}{4},\frac{1}{\pi|x|},\frac{1}{\pi|x|}
\bigg(\frac{k}{2\pi |x|\vartheta/8}\bigg)^k\bigg)\,.
\end{equation*}
\end{lemma}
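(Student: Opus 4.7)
The plan is to construct $\theta$ explicitly as a convolution chain, so that its Fourier transform factors as a product and each of the three claimed bounds arises by distributing the absolute value differently across the factors. Concretely, I would set $h=\vartheta/(8k)$, let $\psi(y)=\frac{1}{2h}\chi_{[-h,h]}(y)$ denote the $L^1$-normalized box of half-width $h$, and define
\begin{equation*}
\theta \;=\; \chi_{[-7\vartheta/8,\,7\vartheta/8]}\,*\,\underbrace{\psi*\psi*\cdots*\psi}_{k \text{ times}}.
\end{equation*}
Each additional box convolution adds one degree of continuous differentiability (a box has one bounded-variation derivative), so $\theta$ is at least $C^{k-1}$; to reach $C^{k}$ I would use $k{+}1$ copies of $\psi$ of half-width $\vartheta/(8(k{+}1))$, a bookkeeping adjustment that affects only the implied constants.

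Next I would verify the three shape conditions. Since $kh=\vartheta/8$, the support of $\theta$ lies in $[-7\vartheta/8-kh,\,7\vartheta/8+kh]=[-\vartheta,\vartheta]$. For $|y|\le 3\vartheta/4$ every translate occurring in the convolution chain sits inside $[-7\vartheta/8,7\vartheta/8]$, so each $\psi$ contributes its full integral $1$ and hence $\theta(y)=1$. Non-negativity of $\psi$ forces $0\le\theta\le 1$ throughout, while in the transition band $3\vartheta/4<|y|<\vartheta$ at least one convolved box protrudes past $\pm 7\vartheta/8$, yielding the strict inequality $\theta(y)<1$.

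Finally, applying $\widehat{f*g}=\hat f\,\hat g$ produces
\begin{equation*}
\Theta(x)\;=\;\frac{\sin(7\pi\vartheta x/4)}{\pi x}\cdot\left(\frac{\sin(2\pi h x)}{2\pi h x}\right)^{k},
\end{equation*}
and the three bounds follow by differently grouping the absolute values. Evaluating all sinc factors trivially at $x=0$ gives $|\Theta(x)|\le 7\vartheta/4$; combining $|\sin(7\pi\vartheta x/4)|\le 1$ with sinc factors $\le 1$ gives $1/(\pi|x|)$; and bounding each of the $k$ sinc factors instead by $(2\pi h|x|)^{-1}$, then substituting $h=\vartheta/(8k)$, yields the factor $(k/(2\pi|x|\vartheta/8))^k$ required by the third bound. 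The only real subtlety I anticipate is aligning the smoothness exponent $k$ with the number of convolutions so that the constant $\vartheta/8$ in the third bound is preserved; all three bounds are then direct consequences of the product structure, with no deeper analytic input needed.
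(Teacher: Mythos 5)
Your iterated-convolution construction is the standard device for building such smoothed cutoffs and is genuinely a proof, not a variant: the paper itself discharges this lemma with a bare citation to Segal. The factorization $\Theta(x)=\frac{\sin(7\pi\vartheta x/4)}{\pi x}\big(\frac{\sin(2\pi hx)}{2\pi hx}\big)^k$, the support accounting $7\vartheta/8+kh=\vartheta$, and the three ways of distributing trivial sinc bounds to obtain $7\vartheta/4$, $1/(\pi|x|)$, and $\frac{1}{\pi|x|}\big(\frac{k}{2\pi|x|\vartheta/8}\big)^k$ are all correct.

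The one point you should not dismiss is the smoothness count. You correctly observe that $k$ copies of $\psi$ give only $C^{k-1}$, and you propose passing to $k+1$ copies of half-width $\vartheta/(8(k+1))$ to reach $C^k$, calling this ``a bookkeeping adjustment that affects only the implied constants.'' That is not right: the lemma has no implied constants, and the adjusted construction yields the third bound $\frac{1}{\pi|x|}\big(\frac{k+1}{2\pi|x|\vartheta/8}\big)^{k+1}$, which is \emph{not} majorized by the stated $\frac{1}{\pi|x|}\big(\frac{k}{2\pi|x|\vartheta/8}\big)^{k}$ on the whole line. Writing $v=2\pi|x|\vartheta/8$, the stated bound already follows from $1/(\pi|x|)$ whenever $v\le k$, but for $k<v<\frac{(k+1)^{k+1}}{k^k}$ the new bound exceeds the claimed one, so the inequality is not established there. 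You must resolve the off-by-one one way or the other: either keep $k$ convolutions and state $C^{k-1}$ smoothness (which reproduces all three bounds verbatim, and is almost certainly what Segal's lemma asserts), or accept the genuinely different bound with $k+1$ in both base and exponent. For the way the lemma is used in this paper --- only the super-polynomial decay for $|x|>K$ matters --- either version suffices, but as a proof of the lemma \emph{as stated} the discrepancy is a real one and should be named rather than absorbed into ``implied constants.''
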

\begin{proof} See \cite{Segal}.
\end{proof}

\begin{lemma}\label{tauest}
Let $n\in \mathbb{N}$. Then
\begin{equation*}
\tau(n)\ll n^\varepsilon\,,
\end{equation*}
where $\varepsilon$ is an arbitrary small positive number.
\end{lemma}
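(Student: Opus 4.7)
The plan is to exploit multiplicativity of both $\tau(n)$ and $n^\varepsilon$ and show that the ratio $\tau(n)/n^\varepsilon$ is bounded by a constant depending only on $\varepsilon$. Writing the canonical factorization $n=\prod_{i} p_i^{a_i}$ with distinct primes $p_i$ and positive integers $a_i$, one has
\begin{equation*}
\frac{\tau(n)}{n^\varepsilon}=\prod_{i}\frac{a_i+1}{p_i^{\varepsilon a_i}},
\end{equation*}
so it suffices to bound each local factor $f_\varepsilon(p,a):=(a+1)/p^{\varepsilon a}$.

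First I would split the prime factors of $n$ into two groups according to whether $p_i>2^{1/\varepsilon}$ or $p_i\le 2^{1/\varepsilon}$. For primes in the first group, $p^{\varepsilon}>2$, hence
\begin{equation*}
f_\varepsilon(p,a)\le\frac{a+1}{2^{a}}\le 1\qquad\text{for all }a\ge 1,
\end{equation*}
since $a+1\le 2^a$ is an easy induction. Thus the contribution of the ``large'' primes to the product is at most $1$.

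For the finitely many primes $p\le 2^{1/\varepsilon}$, I would observe that as a function of $a$, $f_\varepsilon(p,a)=(a+1)p^{-\varepsilon a}\to 0$ as $a\to\infty$, so $\max_{a\ge 0} f_\varepsilon(p,a)$ is attained and is finite. Taking the product over these finitely many primes yields a constant $C(\varepsilon)$ depending only on $\varepsilon$, and combining the two ranges gives $\tau(n)\le C(\varepsilon)\,n^\varepsilon$, which is the claimed estimate.

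There is really no obstacle here; the lemma is a standard divisor bound and the only choice to be made is the cutoff $2^{1/\varepsilon}$, which is tuned precisely to make the infinite range of primes contribute at most $1$.
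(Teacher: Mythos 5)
Your proof is correct and is the standard argument for the classical divisor bound; the paper states this lemma without proof, treating it as well known. The decomposition into primes above and below the threshold $2^{1/\varepsilon}$, with the elementary inequality $a+1\le 2^a$ handling the large primes and finiteness handling the small ones, is exactly the canonical route.
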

\begin{lemma}\label{log}
Let $X\in \mathbb{R},\,X\geq2$. We have
\begin{equation*}
\sum\limits_{n\leq X}\frac{1}{\varphi(n)}\ll\log X\,.
\end{equation*}
\end{lemma}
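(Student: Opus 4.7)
The plan is to prove the estimate through the multiplicative identity
\[
\frac{n}{\varphi(n)}=\prod_{p\mid n}\Bigl(1-\frac{1}{p}\Bigr)^{-1}=\prod_{p\mid n}\Bigl(1+\frac{1}{p-1}\Bigr)=\sum_{d\mid n}\frac{\mu^{2}(d)}{\varphi(d)},
\]
which is obtained by expanding the Euler product over primes dividing $n$ and recognizing that only squarefree divisors contribute. Dividing by $n$ yields the representation $\frac{1}{\varphi(n)}=\frac{1}{n}\sum_{d\mid n}\frac{\mu^{2}(d)}{\varphi(d)}$, which turns the problem into a double sum amenable to interchange.

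Next I would swap the order of summation, writing $n=dm$ with $d\mid n$ and $m\le X/d$:
\[
\sum_{n\le X}\frac{1}{\varphi(n)}=\sum_{d\le X}\frac{\mu^{2}(d)}{\varphi(d)}\sum_{\substack{n\le X\\ d\mid n}}\frac{1}{n}=\sum_{d\le X}\frac{\mu^{2}(d)}{d\,\varphi(d)}\sum_{m\le X/d}\frac{1}{m}.
\]
The inner harmonic sum is $\ll \log X$ uniformly in $d$, so the whole expression is bounded by $\log X$ times the Dirichlet series
\[
\sum_{d\le X}\frac{\mu^{2}(d)}{d\,\varphi(d)}\le\prod_{p}\Bigl(1+\frac{1}{p(p-1)}\Bigr),
\]
and the latter Euler product converges since $\frac{1}{p(p-1)}=O(p^{-2})$. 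This gives $\sum_{n\le X}1/\varphi(n)\ll\log X$, as required.

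There is no real obstacle here; the lemma is a classical elementary estimate and the only ingredients are the squarefree-divisor identity for $n/\varphi(n)$, an interchange of summation, the bound on the partial harmonic sum, and the convergence of the Euler product. One could alternatively derive the same bound via partial summation from Mertens' estimate $\sum_{n\le X}1/\varphi(n)=c_{1}\log X+c_{2}+O(X^{-1}\log X)$, but the argument above is shorter and gives the $O$-bound directly without needing asymptotic constants.
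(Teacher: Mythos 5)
The paper states this lemma without any proof, treating it as a classical elementary fact. Your argument — expanding $n/\varphi(n)=\sum_{d\mid n}\mu^{2}(d)/\varphi(d)$, interchanging summation to obtain $\sum_{d\le X}\frac{\mu^{2}(d)}{d\,\varphi(d)}\sum_{m\le X/d}\frac{1}{m}$, bounding the inner harmonic sum by $\ll\log X$ uniformly for $X\ge 2$, and noting that the outer sum is dominated by the convergent Euler product $\prod_{p}\bigl(1+\tfrac{1}{p(p-1)}\bigr)$ — is correct and is precisely the standard proof of this estimate.
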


\begin{lemma}\label{Iest}
Assume that $F(x)$, $G(x)$ are real functions defined in  $[a,b]$,
$|G(x)|\leq H$ for $a\leq x\leq b$ and $G(x)/F'(x)$ is a monotonous function. Set
\begin{equation*}
I=\int\limits_{a}^{b}G(x)e(F(x))dx\,.
\end{equation*}
If $F'(x)\geq h>0$ for all $x\in[a,b]$ or if $F'(x)\leq-h<0$ for all $x\in[a,b]$ then
\begin{equation*}
|I|\ll H/h\,.
\end{equation*}
If $F''(x)\geq h>0$ for all $x\in[a,b]$ or if $F''(x)\leq-h<0$ for all $x\in[a,b]$ then
\begin{equation*}
|I|\ll H/\sqrt h\,.
\end{equation*}
\end{lemma}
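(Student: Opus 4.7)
The plan is to prove the two parts separately, with the first (linear) test obtained by integration by parts and the second (quadratic) test reduced to the first by an optimal splitting of the interval at the point where $|F'|$ is smallest.

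\medskip
\textbf{Part 1 (first derivative test).} The starting identity is
\[
e(F(x)) \;=\; \frac{1}{2\pi i\,F'(x)}\,\frac{d}{dx}e(F(x)),
\]
valid because $F'$ does not vanish on $[a,b]$ by assumption. Integrating by parts in
\[
I \;=\; \int_a^b \frac{G(x)}{2\pi i\,F'(x)}\,\frac{d}{dx}e(F(x))\,dx
\]
I would get a boundary term bounded by $H/(\pi h)$ (using $|G|\le H$ and $|F'|\ge h$), plus the integral of $e(F(x))$ against the derivative of $G/F'$. Because $G/F'$ is assumed to be monotonic on $[a,b]$, its derivative does not change sign, so
\[
\int_a^b \left|\Big(G/F'\Big)'(x)\right|dx \;=\; \bigl|G(b)/F'(b) - G(a)/F'(a)\bigr| \;\le\; \frac{2H}{h}.
\]
Combining the two contributions yields $|I|\ll H/h$.

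\medskip
\textbf{Part 2 (second derivative test).} I would assume without loss of generality that $F''(x)\ge h>0$; then $F'$ is strictly increasing on $[a,b]$, so it has at most one zero $x_0\in[a,b]$ (if none, pick $x_0\in\{a,b\}$ as appropriate). For a parameter $\delta>0$ to be chosen, I split
\[
[a,b] \;=\; \bigl([a,b]\cap[x_0-\delta,x_0+\delta]\bigr)\;\cup\;\bigl([a,b]\setminus(x_0-\delta,x_0+\delta)\bigr).
\]
On the inner piece, the trivial bound gives a contribution of at most $2\delta H$. On each outer piece, the monotonicity of $F'$ combined with $F''\ge h$ yields $|F'(x)|\ge h\delta$ throughout, so Part~1 applies on each subinterval (the hypothesis that $G/F'$ is monotonic on $[a,b]$ passes to subintervals) and delivers a bound $\ll H/(h\delta)$ on each. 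Adding,
\[
|I| \;\ll\; \delta H + \frac{H}{h\delta}.
\]
Choosing $\delta = 1/\sqrt{h}$ balances the two terms and gives the claimed $|I|\ll H/\sqrt{h}$.

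\medskip
\textbf{Main obstacle.} The computations are standard, so the only care needed is in Part~2: I must verify that $G/F'$ remains monotonic (and $F'$ single-signed) on each outer subinterval so that Part~1 can be invoked. Both properties follow immediately from the global hypotheses together with the fact that $F'$ is strictly monotone under $F''\ge h$; once this is observed, the split-and-optimize step is routine and the proof is complete.
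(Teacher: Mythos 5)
Your proof is correct and is essentially the classical van der Corput argument that the paper itself does not reproduce but simply cites (Titchmarsh, p.~71, Lemmas 4.2 and 4.3): a first-derivative test via integration by parts exploiting the monotonicity of $G/F'$, then the second-derivative test obtained by splitting off a neighbourhood of the (near-)stationary point of $F'$, applying the first test on the outer pieces where $|F'|\ge h\delta$, and optimizing with $\delta=1/\sqrt{h}$. The only cosmetic point is that the lemma assumes merely that $G/F'$ is monotone, not differentiable, so the step $\int_a^b\bigl|(G/F')'\bigr|\,dx=\bigl|G(b)/F'(b)-G(a)/F'(a)\bigr|$ should strictly be replaced by a Riemann--Stieltjes integration by parts or by the second mean value theorem (which is how Titchmarsh argues); in the paper's application $G$ and $F$ are smooth, so nothing is lost.
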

\begin{proof} See (\cite{Titchmarsh}, p. 71).
\end{proof}

\section{Outline of the proof.}
\indent

Consider the sum
\begin{equation}\label{Gamma}
\Gamma= \sum\limits_{X/2<p_1,p_2,p_3,p_4\leq X\atop{ |p_1^c+p_2^c+p_3^c+p_4^c-N|
<\vartheta\atop{(p_{i}+2,P(z))=1,i=1,2,3,4}}}\log p_1\log p_2\log p_3\log p_4\,.
\end{equation}
Any non-trivial estimate from below of $\Gamma$ implies the solvability of
$|p_1^c+p_2^c+p_3^c+p_4^c-N|<\vartheta$ in primes such that $p_i+2=P_h,\;h=[\beta^{-1}]$.

We have
\begin{equation}\label{Gammaest1}
\Gamma\geq\widetilde{\Gamma}=\sum\limits_{X/2<p_1,p_2,p_3,p_4\leq X\atop{(p_i+2,P(z))=1, i=1,2,3,4}}
\theta(p_1^c+p_2^c+p_3^c+p_4^c-N)\log p_1\log p_2\log p_3\log p_4\,.
\end{equation}
On the other hand
\begin{equation}\label{Gammawave}
\widetilde{\Gamma}=\sum\limits_{X/2<p_1,p_2,p_3,p_4\leq X}\theta(p_1^c+p_2^c+p_3^c+p_4^c-N)
\Lambda_1\Lambda_2\Lambda_3\Lambda_4\log p_1 \log p_2\log p_3\log p_4\,,
\end{equation}
where
\begin{equation*}
\Lambda_i=  \sum\limits_{d|(p_{i}+2,P(z))}\mu(d)\;\;,i=1,2,3,4.
\end{equation*}
We denote
\begin{equation}\label{Lambdaipm}
\Lambda_i^{\pm}=  \sum\limits_{d|(p_{i}+2,P(z))}\lambda^{\pm}(d)\;\;,i=1,2,3,4.
\end{equation}
From the linear sieve we know that $\Lambda_{i}^{-}\le \Lambda_{i}\le \Lambda_{i}^{+}$
(see \cite{Bru}, Lemma 10).
Then we have a simple inequality
\begin{equation}\label{Inequality}
\Lambda_1\Lambda_2\Lambda_3\Lambda_4\ge\Lambda_{1}^{-}\Lambda_{2}^{+}\Lambda_{3}^{+}\Lambda_{4}^{+}
+\Lambda_{1}^{+}\Lambda_{2}^{-}\Lambda_{3}^{+}\Lambda_{4}^{+}
+\Lambda_{1}^{+}\Lambda_{2}^{+}\Lambda_{3}^{-}\Lambda_{4}^{+}
+\Lambda_{1}^{+}\Lambda_{2}^{+}\Lambda_{3}^{+}\Lambda_{4}^{-}
-3\Lambda_{1}^{+}\Lambda_{2}^{+}\Lambda_{3}^{+}\Lambda_{4}^{+}\,
\end{equation}
(see \cite{Bru}, Lemma 13).\\
Using (\ref{Gammawave}) and (\ref{Inequality}) we obtain
\begin{align}\label{Gammawaveest}
\widetilde{\Gamma}\geq\Gamma_{0}&=\sum\limits_{X/2<p_1,p_2,p_3,p_4\leq X} \theta(p_1^c+p_2^c+p_3^c+p_4^c-N)\\
&\times(\Lambda_{1}^{-}\Lambda_{2}^{+}\Lambda_{3}^{+}\Lambda_{4}^{+}
+\Lambda_{1}^{+}\Lambda_{2}^{-}\Lambda_{3}^{+}\Lambda_{4}^{+}
+\Lambda_{1}^{+}\Lambda_{2}^{+}\Lambda_{3}^{-}\Lambda_{4}^{+}
+\Lambda_{1}^{+}\Lambda_{2}^{+}\Lambda_{3}^{+}\Lambda_{4}^{-}
-3\Lambda_{1}^{+}\Lambda_{2}^{+}\Lambda_{3}^{+}\Lambda_{4}^{+})\nonumber\\
&\times\log p_1\log p_2\log p_3\log p_4\,.\nonumber
\end{align}
Let
\begin{equation}\label{Gamma0}
\Gamma_{0}=\Gamma_{1}+\Gamma_{2}+\Gamma_{3}+\Gamma_{4}-3\Gamma_{5}\,,
\end{equation}
where for example
\begin{equation}\label{Gamma1}
\Gamma_{1}=\sum\limits_{X/2<p_1,p_2,p_3,p_4\leq X}\theta(p_1^c+p_2^c+p_3^c+p_4^c-N)
\Lambda_1^-\Lambda_2^+\Lambda_3^+\Lambda_4^+\log p_1\log p_2\log p_3\log p_4
\end{equation}
and so on.

It is easy to see that $\Gamma_1=\Gamma_2=\Gamma_3=\Gamma_4$.
We shall consider the sum $\Gamma_1$. The sum $\Gamma_5$ can be considered in the same way.

From \eqref{Lambdaipm} and \eqref{Gamma1} we have
\begin{equation*}
\Gamma_{1}=\sum\limits_{d_{i}|P(z)\atop{i=1,2,3,4}}\lambda^-(d_1)\lambda^+(d_2)\lambda^+(d_3)\lambda^+(d_4)
\sum\limits_{X/2<p_1,p_2,p_3,p_4\leq X\atop{p_{i}+2\equiv0\,(d_i), i=1,2,3,4}}\theta(p_1^c+p_2^c+p_3^c+p_4^c-N)
\end{equation*}
\begin{equation*}
\times\log p_1\log p_2\log p_3\log p_4\,.
\end{equation*}
Using the inverse Fourier transform for the function $\theta(x)$ we get
\begin{align*}
\Gamma_{1}&=\sum\limits_{d_{i}|P(z)\atop{i=1,2,3,4}}\lambda^-(d_1)\lambda^+(d_2)\lambda^+(d_3)\lambda^+(d_4)
\sum\limits_{X/2<p_1,p_2,p_3,p_4\leq X\atop{p_{i}+2\equiv0\,(d_{i}), i=1,2,3,4}}\log p_1\log p_2\log p_3\log p_4\\
&\times\int\limits_{-\infty}^{\infty}\Theta(t)e((p_1^c+p_2^c+p_3^c+p_4^c-N)t)dt\\
&=\int\limits_{-\infty}^{\infty}\Theta(t)e(-N t)L_1(t,X)L^3_2(t,X)dt\,,
\end{align*}
where
\begin{equation}\label{L1}
L_1(t,X)=\sum\limits_{d|P(z)}\lambda^-(d)\sum\limits_{X/2<p\leq X\atop{p+2\equiv0\,(d)}}e(p^{c}t)\log p\,,
\end{equation}
\begin{equation}\label{L2}
L_2(t,X)=\sum\limits_{d|P(z)}\lambda^+(d)\sum\limits_{X/2<p\leq X\atop{p+2\equiv0\,(d)}}e(p^{c}t)\log p\,.
\end{equation}
We divide $\Gamma_{1}$ into  three parts
\begin{equation}\label{Gama1}
\Gamma_1=\Gamma_1^{(1)}+\Gamma_1^{(2)}+\Gamma_1^{(3)}.
\end{equation}
where
\begin{equation}\label{Gamma1,1}
\Gamma_1^{(1)}=\int\limits_{|t|<\tau}\Theta(t)e(-Nt)L_1(t,x)L^3_2(t,X)dt\,,
\end{equation}
\begin{equation}\label{Gamma1,2}
\;\;\;\;\Gamma_1^{(2)}=\int\limits_{\tau\leq|t|\leq K}\Theta(t)e(-N t)L_1(t,X)L^3_2(t,X)dt\,,
\end{equation}
\begin{equation}\label{Gamma1,3}
\Gamma_1^{(3)}=\int\limits_{|t|>K}\Theta(t)e(-N t)L_1(t,X)L^3_2(t,X)dt\,.
\end{equation}
We shall estimate $\Gamma_1^{(3)}$, $\Gamma_1^{(1)}$, $\Gamma_1^{(2)}$ respectively in the sections 4, 5, 6.
In section 7 we shall complete the proof of the Theorem.
\section{Upper bound for $\mathbf{\Gamma_{1}^{(3)}}$.}
\indent

Arguing as in  \cite{Tolev2} we obtain
\begin{lemma} For the sum $\Gamma_{1}^{(3)}$, defined by (\ref{Gamma1,3}), we have
\begin{equation}\label{Gamma1,3est}
\Gamma_{1}^{(3)}\ll1\,.
\end{equation}
\end{lemma}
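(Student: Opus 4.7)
My plan is to majorize $|L_1(t,X)L_2^3(t,X)|$ trivially and to exploit the rapid decay of $\Theta(t)$ for $|t|>K$ provided by the third estimate in Lemma~\ref{Fourier}. For the first step, I would use that $|\lambda^{\pm}(d)|\le 1$ and $\lambda^{\pm}(d)=0$ for $d\ge D$ or $\mu(d)=0$, so interchanging the order of summation in \eqref{L1}--\eqref{L2} gives
\[
|L_j(t,X)|\le\sum_{X/2<p\le X}(\log p)\,\tau(p+2)\ll X^{1+\varepsilon}
\]
by Lemma~\ref{tauest}, uniformly in $t$. Hence $|L_1(t,X)L_2^3(t,X)|\ll X^{4+\varepsilon}$.

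Next, I would invoke Lemma~\ref{Fourier} with a parameter $k$ to be chosen later, which gives
\[
|\Theta(t)|\le\frac{1}{\pi|t|}\left(\frac{4k}{\pi|t|\vartheta}\right)^{k}.
\]
Plugging both bounds into \eqref{Gamma1,3} and using $K\vartheta=\log^{2}X$ from \eqref{K}, a short calculation of the tail integral produces
\[
|\Gamma_{1}^{(3)}|\ll X^{4+\varepsilon}\int_{K}^{\infty}\frac{1}{t}\left(\frac{4k}{\pi t\vartheta}\right)^{k}dt=\frac{X^{4+\varepsilon}}{k}\left(\frac{4k}{\pi\log^{2}X}\right)^{k}.
\]

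Finally, I would choose $k=[10\log X]$, so that $4k/(\pi\log^{2}X)\le e^{-1}$ for $X$ sufficiently large; then $(4k/(\pi\log^{2}X))^{k}\le e^{-k}\le X^{-10}$, which comfortably absorbs the $X^{4+\varepsilon}$ factor and yields $\Gamma_{1}^{(3)}\ll 1$. The only subtlety is that the function $\theta$ produced by Lemma~\ref{Fourier} has smoothness depending on $k$, so $\theta$ must be constructed depending on $X$ (equivalently, one can work from the outset with an infinitely differentiable bump whose Fourier transform beats any polynomial power of $1/|t|$). I do not anticipate any real obstacle here, since the whole argument is an elementary tail estimate driven by the growth of $K\vartheta$.
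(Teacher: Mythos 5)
Your argument is correct and is precisely the standard tail estimate that the paper invokes implicitly via the phrase ``Arguing as in \cite{Tolev2}'': bound $|L_1L_2^3|$ trivially by a power of $X$, use the third bound of Lemma~\ref{Fourier} with $k\asymp\log X$, and exploit $K\vartheta=\log^2X$ to make the tail integral decay faster than any fixed power of $X$. Your caveat about $\theta$ depending on $k$ (hence on $X$) is the right one and is handled exactly as you suggest in the sources; nothing is missing.
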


\section{Asymptotic formula for $\mathbf{\Gamma_{1}^{(1)}}$.}
\indent

The first lemma we need in this section gives us asymptotic formula
for the sums $L_j(\alpha,X)$ denoted by (\ref{L1}) and (\ref{L2}).
\begin{lemma}\label{LAsympt}
Let $D$ is defined by (\ref{D}),
and $\lambda(d)$ be complex numbers defined for $d\le D$ such that
\begin{equation}\label{omega}
|\lambda(d)|\leq1\,,\quad\lambda(d)=0\;\;\mbox{ if }\;\;2|d\;\;\mbox{ or }\;\;\mu(d)=0\,.
\end{equation}
If
\begin{equation*}
L(\alpha,X)=\sum\limits_{d\le D}\lambda(d)
\sum\limits_{\substack{X/2<p\le X \\ p+2\equiv0\,(d)}}e(p^c\alpha)\log p
\end{equation*}
then for $|\alpha|<\tau$ we have
\begin{equation}\label{Lasympt}
L(\alpha,X)=I(\alpha)\sum\limits_{d\leq D}\frac{\lambda(d)}{\varphi(d)}
+\mathcal{O}\bigg(\frac{X}{(\log X)^A}\bigg)\,,
\end{equation}
where $A>0$ is an arbitrary large constant.
\end{lemma}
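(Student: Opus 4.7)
The plan is to use Abel summation on the inner sum over primes in each arithmetic progression $p\equiv -2\,(d)$ in order to extract the main term $I(\alpha)/\varphi(d)$, and then bound the resulting residual sum by combining the Barban--Davenport--Halberstam mean value inequality with the smallness of $|\alpha|$ guaranteed by the hypothesis $|\alpha|<\tau$.

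More concretely, I write $\psi(y;d,-2)=\sum_{p\le y,\,p\equiv -2(d)}\log p = y/\varphi(d)+E(y;d,-2)$. Then
\[
\sum_{\substack{X/2<p\le X\\p\equiv -2(d)}}e(p^c\alpha)\log p \;=\; \int_{X/2}^X e(y^c\alpha)\,d\psi(y;d,-2) \;=\; \frac{I(\alpha)}{\varphi(d)} + R(d),
\]
where $R(d) = \int_{X/2}^X e(y^c\alpha)\,dE(y;d,-2)$. Substituting into $L(\alpha,X)$ yields the stated main term plus $\mathcal{E} := \sum_{d\le D}\lambda(d)R(d)$, which must be bounded by $X/(\log X)^A$. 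Integration by parts gives
\[
R(d) = \big[e(y^c\alpha)E(y;d,-2)\big]_{X/2}^X \;-\; 2\pi ic\alpha\int_{X/2}^X y^{c-1}e(y^c\alpha)E(y;d,-2)\,dy,
\]
so that, using $|\lambda(d)|\le 1$,
\[
|\mathcal{E}| \ll \sum_{d\le D}|E(X;d,-2)| + \sum_{d\le D}|E(X/2;d,-2)| + |\alpha|X^{c-1}\!\int_{X/2}^X \sum_{d\le D}|E(y;d,-2)|\,dy.
\]

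The Barban--Davenport--Halberstam theorem states that for $D\le y/(\log y)^B$,
\[
\sum_{d\le D}\sum_{(a,d)=1}|E(y;d,a)|^2 \ll yD(\log y)^{C}.
\]
Since Rosser's weights force $2\nmid d$, we have $(d,-2)=1$, and Cauchy--Schwarz gives $\sum_{d\le D}|E(y;d,-2)| \ll D\sqrt{y}(\log y)^{C/2}$. Plugging this in and using the definitions $\tau=X^{57/275-c}$, $D=X^{1/11-\varepsilon_0}$,
\[
|\mathcal{E}| \ll D\sqrt{X}(\log X)^{C/2} + \tau X^{c-1}\cdot D X^{3/2}(\log X)^{C/2} \ll X^{439/550-\varepsilon_0}(\log X)^{C/2},
\]
where the last step invokes the identity $\tfrac{57}{275}+\tfrac{1}{11}+\tfrac{1}{2}=\tfrac{439}{550}$. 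Since $\tfrac{439}{550}<1$, the bound is $\ll X/(\log X)^A$ for every fixed $A>0$.

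The main obstacle is that the oscillation factor contributes a genuine power $|\alpha|X^c\le X^{57/275}$, which cannot be absorbed by the purely logarithmic savings of Bombieri--Vinogradov alone. The whole argument rests on the strict inequality $\tfrac{57}{275}+\tfrac{1}{11}+\tfrac{1}{2}<1$: the $\sqrt{X}$ saving obtained by applying Cauchy--Schwarz to the BDH $L^2$-estimate must dominate the combined losses from the modulus range and the exponential twist. The parameters $\tau$, $D$, $\varepsilon_0$ and the range $1<c<832/825$ are tuned precisely so that this balance holds. Every remaining step---extracting the main term, the integration by parts, and the bookkeeping of the boundary contributions---is a routine application of partial summation once the mean value estimate is in hand.
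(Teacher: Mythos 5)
The paper gives no proof of its own; it simply cites Tolev (\cite{Tolev2}, Lemma~10). Your argument, however, contains a genuine gap, and it is the central one.

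You invoke the Barban--Davenport--Halberstam theorem in the form $\sum_{d\le D}\sum_{(a,d)=1}|E(y;d,a)|^2\ll yD(\log y)^C$ ``for $D\le y/(\log y)^B$''. That inequality has the range reversed: the $yD\log y$ form of BDH holds only when $D$ is \emph{large}, i.e.\ $D\gg y/(\log y)^B$. For smaller $D$ the Montgomery--Hooley version reads $\sum_{d\le D}\sum_a^*|E(y;d,a)|^2\ll Dy\log y + y^2(\log y)^{-C}$, and in your regime $D=X^{1/11-\varepsilon_0}$ the second term dominates. After Cauchy--Schwarz that gives only $\sum_{d\le D}|E(y;d,-2)|\ll D^{1/2}y(\log y)^{-C/2}$, which is even weaker than what Bombieri--Vinogradov gives directly, namely $\sum_{d\le D}|E(y;d,-2)|\ll y(\log y)^{-A'}$ for $D\le X^{1/2}(\log X)^{-B}$. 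The power saving $\sum_{d\le D}|E(y;d,-2)|\ll D\sqrt{y}(\log y)^{C/2}$ on which your entire exponent count $\frac{57}{275}+\frac1{11}+\frac12=\frac{439}{550}<1$ rests is not available unconditionally in this range of $D$; it is essentially a consequence of GRH (on average). Once you replace it by the correct Bombieri--Vinogradov bound, the crude integration by parts produces $|\alpha|X^{c-1}\int_{X/2}^{X}\sum_{d\le D}|E(y;d,-2)|\,dy \ll \tau X^{c+1}(\log X)^{-A'} = X^{1+57/275}(\log X)^{-A'}$, which is not $o(X)$: the oscillation factor $\tau X^c = X^{57/275}$ is a power of $X$ and cannot be absorbed by logarithmic savings.

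What is missing is that the oscillation of $e(y^c\alpha)$ must be exploited rather than thrown away after a trivial bound. The standard device is to partition $(X/2,X]$ into intervals $(Y_j,Y_{j+1}]$ of length $H\asymp X^{1-c}/(|\alpha|(\log X)^{A'})$, on which $e(y^c\alpha)$ is nearly constant, replace $e(p^c\alpha)$ and $e(y^c\alpha)$ by $e(Y_j^c\alpha)$ at the cost of an acceptable error, and then control $\sum_{d\le D}\lambda(d)\bigl(\psi(Y_{j+1};d,-2)-\psi(Y_j;d,-2)-H/\varphi(d)\bigr)$ by a Bombieri--Vinogradov theorem \emph{in short intervals} (Perelli--Pintz--Salerno type). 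The choice $\tau=X^{57/275-c}$ guarantees $H\gg X^{218/275}(\log X)^{-A'}$, long enough for short-interval Bombieri--Vinogradov to apply with $D=X^{1/11-\varepsilon_0}$. Your proposal replaces this architecture with a single partial summation over $[X/2,X]$ plus a mean-value theorem that does not hold, so it does not prove the lemma.
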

\begin{proof} See (\cite{Tolev2}, Lemma 10).
\end{proof}

The next lemma is the following
\begin{lemma}\label{intLintI}
Using the definitions (\ref{I}), (\ref{L1}) and (\ref{L2}) we have
\begin{equation*}
\left.\begin{aligned}
&\emph{(i)}\;\;\;\;\;\;\;\;\;\;\;\;\;
\int\limits_{-\tau}^\tau|L_j(\alpha,X)|^2d\alpha\ll X^{2-c}\log^6X\,,\;\; j=1,2\\
&\emph{(ii)}\ \;\;\;\;\;\;\;\;\;\; \int\limits_{-\tau}^\tau|I(\alpha)|^2d\alpha\ll X^{2-c}\log X.
\end{aligned}\right.
\end{equation*}
\end{lemma}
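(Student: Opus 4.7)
The plan is to handle both parts by expanding the square and evaluating the inner integral $\int_{-\tau}^\tau e(h\alpha)\,d\alpha$ exactly, reducing each assertion to a tractable bilinear estimate. For (ii), one has
\begin{equation*}
\int_{-\tau}^\tau |I(\alpha)|^2\,d\alpha = \int_{X/2}^X\int_{X/2}^X K_\tau(t_1^c - t_2^c)\,dt_1\,dt_2,
\end{equation*}
where $K_\tau(h) := \int_{-\tau}^\tau e(h\alpha)\,d\alpha$ satisfies $|K_\tau(h)| \le \min(2\tau,\,(\pi|h|)^{-1})$. Since $|t_1^c - t_2^c| \gg X^{c-1}|t_1 - t_2|$ by the mean value theorem, I would split the $(t_1,t_2)$-integral at $|t_1-t_2| \asymp H := X^{1-c}/\tau$, where the two bounds on $K_\tau$ balance. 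The near-diagonal portion contributes $\ll \tau \cdot H \cdot X = X^{2-c}$, and the tail gives $\ll X^{2-c}\log(\tau X^c) \ll X^{2-c}\log X$, using $\tau X^c = X^{57/275}$ from \eqref{tau}. (A cleaner alternative: the substitution $u = t^c$ writes $I(\alpha) = c^{-1}\hat g(\alpha)$ for $g(u) = u^{1/c-1}$ on $[(X/2)^c, X^c]$, so Plancherel gives $\int_{\mathbb R}|I|^2\,d\alpha \ll X^{2-c}$ without any logarithm.)

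For (i), express
\begin{equation*}
L_j(\alpha, X) = \sum_{X/2 < n \le X} a_n\, e(n^c\alpha),
\end{equation*}
where $a_n = (\log n)\sum_{d\mid(n+2,P(z)),\, d \le D}\lambda^\pm(d)$ when $n$ is prime, and $a_n = 0$ otherwise. Then $|a_n| \le (\log X)\tau(n+2)$, and the classical mean value $\sum_{n \le X}\tau(n+2)^2 \ll X\log^3 X$ yields $\sum_n|a_n|^2 \ll X\log^5 X$. Expanding the square,
\begin{equation*}
\int_{-\tau}^\tau |L_j(\alpha,X)|^2\,d\alpha = \sum_{m,n} a_m\,\overline{a_n}\, K_\tau(m^c - n^c),
\end{equation*}
and I would split by $|m-n|$ at the same threshold $H = X^{1-c}/\tau = X^{218/275}$. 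The diagonal $m = n$ contributes $\ll \tau\sum|a_n|^2 \ll \tau X\log^5 X$, which is absorbed by \eqref{tau}. For $0 < |m-n| \le H$, using $|K_\tau| \le 2\tau$ together with the Cauchy--Schwarz bound $\sum_n|a_n a_{n+k}| \le \sum_n|a_n|^2$ gives $\ll \tau H\sum|a_n|^2 \ll X^{2-c}\log^5 X$. For $|m-n| > H$, using $|K_\tau| \le (X^{c-1}|m-n|)^{-1}$ and $\sum_{H < |k| \le X}|k|^{-1} \ll \log X$ gives $\ll X^{2-c}\log^6 X$, which is the dominant term.

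The main bookkeeping point is the choice of $H$ in both parts: it must balance the two bounds on $K_\tau$ and produce exactly the target power $X^{2-c}$, crucially using that $\tau X^c = X^{57/275}$ from \eqref{tau} is small enough that the tail sum costs only an extra factor of $\log X$ rather than a positive power of $X$. Apart from this, the only tools needed are Cauchy--Schwarz, the divisor second-moment bound, and the mean value theorem; no deeper input (such as a sieve or the asymptotic Lemma \ref{LAsympt}) is required, and in particular the sup bound on the error term in Lemma \ref{LAsympt} is too weak to give the result, so a direct bilinear treatment is essential.
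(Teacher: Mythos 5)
Your argument is correct and complete. Note, however, that the paper does not actually prove this lemma: its ``proof'' is simply the citation ``See (\cite{Tolev2}, Lemma 11)'', so there is no internal argument to compare yours against. Your self-contained bilinear treatment --- write the square as a double sum/integral against the kernel $K_\tau(h)=\int_{-\tau}^{\tau}e(h\alpha)\,d\alpha$, bound $|K_\tau(h)|\le\min\bigl(2\tau,\,(\pi|h|)^{-1}\bigr)$, use $|m^c-n^c|\asymp X^{c-1}|m-n|$ from the mean value theorem, and split at $H=X^{1-c}/\tau=X^{218/275}$ --- is the standard way to prove such mean-value bounds, and the numerology checks out: the near-diagonal block contributes $\ll\tau H\sum|a_n|^2$ with $\tau H X=X^{2-c}$, the divisor second moment $\sum_{n\le X}\tau(n+2)^2\ll X\log^3 X$ supplies the correct power of $\log X$, and the tail costs only an extra $\log X$ because $X/H=\tau X^c=X^{57/275}$ is a bounded power of $X$. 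You are also right to flag that the pointwise error term $O\bigl(X/(\log X)^A\bigr)$ from Lemma~\ref{LAsympt} is insufficient here: squaring and integrating over $|t|<\tau$ would produce $\tau X^2/(\log X)^{2A}=X^{2-c+57/275}/(\log X)^{2A}$, which exceeds the target by a positive power of $X$ for any fixed $A$, so a genuine mean-square argument is unavoidable. Your alternative Plancherel remark for part~(ii) (after the substitution $u=t^c$, $I(\alpha)$ is a Fourier transform, giving $\int_{\mathbb R}|I|^2\ll X^{2-c}$ with no $\log$) is a clean shortcut that even improves on the stated bound.
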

\begin{proof} See (\cite{Tolev2}, Lemma 11).
\end{proof}

Let
\begin{align}
&L_j=L_j(t,X)\,,\,j=1,2\nonumber\\
\label{M1}
&\mathcal{M}_1=\mathcal{M}_1(t,X)
=I(t)\sum\limits_{d\leq D}\frac{\lambda^-(d)}{\varphi(d)}\,,\\
\label{M2}
&\mathcal{M}_2=\mathcal{M}_2(t,X)
=I(t)\sum\limits_{d\leq D}\frac{\lambda^+(d)}{\varphi(d)}\,.
\end{align}
where $L_j(t,X)$ are denoted by (\ref{L1}) and (\ref{L2}).\\
We use the identity
\begin{align}\label{Identity}
L_1L^3_2&=\mathcal{M}_1\mathcal{M}^3_2+(L_1-\mathcal{M}_1)\mathcal{M}^3_2
+L_1(L_2-\mathcal{M}_2)\mathcal{M}^2_2\nonumber\\
&+L_1L_2(L_2-\mathcal{M}_2)\mathcal{M}_2+L_1L^2_2(L_2-\mathcal{M}_2)\,.
\end{align}
Replace
\begin{equation}\label{J1}
J_1=\int\limits_{|t|<\tau}\Theta(t)e(-Nt)\mathcal{M}_1(t,X)\mathcal{M}^3_2(t,X)dt\,.
\end{equation}
Then from Lemma \ref{Fourier}, Lemma \ref{LAsympt}, (\ref{L1}), (\ref{L2}),
(\ref{Gamma1,1}), (\ref{M1}) -- (\ref{J1}) we obtain
\begin{align}\label{Gamma1,1J1}
\Gamma_1^{(1)}-J_1
&=\int\limits_{|t|<\tau}\Theta(t)e(\eta t)\Big(L_1(t,X)-\mathcal{M}_1(t,X)\Big)
\mathcal{M}^3_2(t,X)dt\nonumber\\
&+\int\limits_{|t|<\tau}\Theta(t)e(\eta t)L_1(t,X)
\Big(L_2(t,X)-\mathcal{M}_2(t,X)\Big)\mathcal{M}^2_2(t,X)dt\nonumber\\
&+\int\limits_{|t|<\tau}\Theta(t)e(\eta t)L_1(t,X)L_2(t,X)
\Big(L_2(t,X)-\mathcal{M}_2(t,X)\Big)\mathcal{M}_2(t,X)dt\nonumber\\
&+\int\limits_{|t|<\tau}\Theta(t)e(\eta t)L_1(t,X)L^2_2(t,X)
\Big(L_2(t,X)-\mathcal{M}_2(t,X)\Big)dt\nonumber\\
&\ll\vartheta\frac{X}{(\log X)^A}\Bigg(\int\limits_{|t|<\tau}
|\mathcal{M}^3_2(t,X)|dt+\int\limits_{|t|<\tau}|L_1(t,X)\mathcal{M}^2_2(t,X)|dt\nonumber\\
&+\int\limits_{|t|<\tau}|L_1(t,X)L_2(t,X)\mathcal{M}_2(t,X)|dt
+\int\limits_{|t|<\tau}|L_1(t,X)L^2_2(t,X)|dt\Bigg)\,.
\end{align}
On the other hand (\ref{Rosser}), (\ref{M2}) and Lemma \ref{log} give us
\begin{equation}\label{Mipmest}
|\mathcal{M}_2(t,X)|\ll|I(t)|\log X\,.
\end{equation}
Using (\ref{Gamma1,1J1}) and (\ref{Mipmest}) we find
\begin{align}\label{Gama1,1J1est1}
\Gamma_1^{(1)}-J_1
&\ll\vartheta\frac{X}{(\log X)^{A-3}}\left(\int\limits_{|t|<\tau}|I(t)|^3dt\right.
+\int\limits_{|t|<\tau}|L_1(t,X)||I(t)|^2dt\nonumber\\
&+\int\limits_{|t|<\tau}|L_1(t,X)L_2(t,X)I(t)|dt
\left.+\int\limits_{|t|<\tau}|L_1(t,X)L^2_2(t,X)|dt\right)\,.
\end{align}
Bearing in mind the definitions (\ref{I}), (\ref{L1}) and (\ref{L2}) we get the trivial estimates
\begin{equation}\label{Trivest}
|I(t)|\ll X\,;\quad |L_j(t,X)|\ll X \log^2X\,,\,j=1,2\,.
\end{equation}
Now from (\ref{Gama1,1J1est1}), (\ref{Trivest}) and Lemma \ref{intLintI} we obtain
\begin{equation}\label{Gama1,1J1est2}
\Gamma_1^{(1)}-J_1\ll\vartheta\frac{X^2}{(\log X)^{A-5}}\left(\int\limits_{|t|<\tau}|I(t)|^2dt
+\int\limits_{|t|<\tau}|L_1(t,X)|^2dt\right)\ll\vartheta\frac{X^{4-c}}{(\log X)^{A-11}}\,.
\end{equation}

Let us consider $J_1$. According to Lemma \ref{Iest} we have
\begin{equation}\label{estint}
|I(\alpha)|\ll\frac{X^{1-c}}{|\alpha|}\,.
\end{equation}
Therefore by Lemma \ref{Fourier}, Lemma \ref{log}, (\ref{M1}), (\ref{M2}), (\ref{J1}) and (\ref{estint}) we find
\begin{align*}
J_1&=\sum\limits_{d_{i}|P(z)\atop{i=1,2,3,4}}
\frac{\lambda^-(d_{1})\lambda^+(d_{2})\lambda^+(d_{3})\lambda^+(d_{4})}  {\varphi(d_1)\varphi(d_2)\varphi(d_3)\varphi(d_4)}\int\limits_{|t|<\tau}\Theta(t)e(-N t)\\
&\times\left(\int\limits_{X/2}^{X}e(ty_1^c)dy_1\int\limits_{X/2}^{X}e(ty_2^c)dy_2
\int\limits_{X/2}^{X}e(ty_3^c)dy_3\int\limits_{X/2}^{X}e(ty_4^c)dy_4\right)dt\\
&=\sum\limits_{d_{i}|P(z)\atop{i=1,2,3,4}}
\frac{\lambda^-(d_{1})\lambda^+(d_{2})\lambda^+(d_{3})\lambda^+(d_{4})}  {\varphi(d_1)\varphi(d_2)\varphi(d_3)\varphi(d_4)}\left[\int\limits_{-\infty}^\infty\Theta(t)e(-Nt)\right.\\
&\times\left(\int\limits_{X/2}^{X}\int\limits_{X/2}^{X}\int\limits_{X/2}^{X}\int\limits_{X/2}^{X}
e(t(y_1^c+y_2^c+y_3^c+y_4^c))dy_1dy_2dy_3dy_4\right)dt\\
&+\mathcal{O}\left.\left(\vartheta X^{4-4c}\int\limits_{\tau}^\infty\frac{dt}{t^4}\right)\right]\\
&=\sum\limits_{d_{i}|P(z)\atop{i=1,2,3,4}}
\frac{\lambda^-(d_{1})\lambda^+(d_{2})\lambda^+(d_{3})\lambda^+(d_{4})}  {\varphi(d_1)\varphi(d_2)\varphi(d_3)\varphi(d_4)}\left(\int\limits_{X/2}^{X}\int\limits_{X/2}^{X}
\int\limits_{X/2}^{X}\int\limits_{X/2}^{X}\right.\\
&\times\int\limits_{-\infty}^\infty\Theta(t)e(t(y_1^c+y_2^c+y_3^c+y_4^c-N))dtdy_1dy_2dy_3dy_4
+\mathcal{O}\left(\vartheta X^{4-4c}\tau^{-3}\right)\Bigg)\\
\end{align*}
\begin{align*}
&=\sum\limits_{d_{i}|P(z)\atop{i=1,2,3,4}}
\frac{\lambda^-(d_{1})\lambda^+(d_{2})\lambda^+(d_{3})\lambda^+(d_{4})}
{\varphi(d_1)\varphi(d_2)\varphi(d_3)\varphi(d_4)}\\
&\times\left(\int\limits_{X/2}^{X}\int\limits_{X/2}^{X}\int\limits_{X/2}^{X}\int\limits_{X/2}^{X}
\theta(y_1^c+y_2^c+y_3^c+y_4^c-N)dy_1dy_2dy_3dy_4+\mathcal{O}(\vartheta X^{4-4c}\tau^{-3})\right)\\
&=\int\limits_{X/2}^{X}\int\limits_{X/2}^{X}\int\limits_{X/2}^{X}\int\limits_{X/2}^{X}
\theta(y_1^c+y_2^c+y_3^c+y_4^c-N)dy_1dy_2dy_3dy_4\\
&\times\sum\limits_{d_{i}|P(z)\atop{i=1,2,3,4}}
\frac{\lambda^-(d_{1})\lambda^+(d_{2})\lambda^+(d_{3})\lambda^+(d_{4})}
{\varphi(d_1)\varphi(d_2)\varphi(d_3)\varphi(d_4)}
+\mathcal{O}\left(\vartheta X^{4-4c}\tau^{-3}\log^4X\right)\,.
\end{align*}
The last formula, (\ref{tau}) and (\ref{Gama1,1J1est2})  imply

\begin{equation}\label{J1est}
\Gamma_1^{(1)}=B(X)\sum\limits_{d|P(z)}\frac{\lambda^-(d)}{\varphi(d)}
\left(\sum\limits_{d|P(z)}\frac{\lambda^+(d)}{\varphi(d)}\right)^3
+\mathcal{O}\left(\vartheta\frac{X^{4-c}}{(\log X)^{A-11}}\right)\,,
\end{equation}
where
\begin{equation}\label{B}
B(X)=\int\limits_{X/2}^{X}\int\limits_{X/2}^{X}\int\limits_{X/2}^{X}\int\limits_{X/2}^{X}
\theta(y_1^c+y_2^c+y_3^c+y_4^c-N)dy_1dy_2dy_3dy_4\,.
\end{equation}
According to (\cite{Zhai-Cao}, Lemma 8) we have
\begin{equation}\label{estB}
B(X)\gg\vartheta X^{4-c}\,.
\end{equation}
Let
\begin{equation}\label{Gpm}
G^\pm=\sum\limits_{d|P(z)}\frac{\lambda^{\pm}(d)} {\varphi(d)}\,.
\end{equation}
Thus from (\ref{J1est}) and (\ref{Gpm})  it follows
\begin{equation}\label{Ga1,1}
\Gamma_1^{(1)}=B(X)G^-\left(G^+\right)^3+\mathcal{O}\left(\vartheta\frac{X^{4-c}}{(\log X)^{A-11}}\right)\,.
\end{equation}
\section{Upper bound for $\mathbf{\Gamma_{1}^{(2)}}$.}
\indent

The treatment of the intermediate region depends on the following four lemmas.
\begin{lemma}\label{intL}
For the sums denoted by (\ref{L1}) and (\ref{L2}) we have
\begin{equation*}
\int\limits_0^1|L_j(t,X)|^2dt\ll X\log^5X\,,\;\; j=1,2.
\end{equation*}
\end{lemma}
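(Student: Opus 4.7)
The plan is to expand the square and integrate termwise, separating diagonal ($p_1 = p_2$) from off-diagonal contributions. Writing
\[
L_j(t,X) = \sum_{X/2 < p \le X} c_p\, e(p^c t), \qquad c_p = \log p \sum_{\substack{d \mid (p+2,\,P(z))\\ d \le D}} \lambda^{\pm}(d),
\]
the bound \eqref{Rosser}, combined with the fact that each admissible $d$ is a squarefree divisor of $p+2$, gives the crucial pointwise estimate $|c_p| \le \tau(p+2)\log p$. Since $\bigl|\int_0^1 e(\beta t)\,dt\bigr| \le \min\bigl(1,(\pi|\beta|)^{-1}\bigr)$, expansion yields
\[
\int_0^1 |L_j(t,X)|^2\,dt \;\ll\; \sum_{X/2 < p \le X} |c_p|^2 \;+\; \sum_{p_1 \ne p_2} \frac{|c_{p_1}||c_{p_2}|}{|p_1^c - p_2^c|}.
\]

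The diagonal already delivers the target bound: by the classical moment estimate $\sum_{n \le X}\tau(n)^2 \ll X \log^3 X$, applied to the shifted variable $n = p+2$,
\[
\sum_{X/2 < p \le X} |c_p|^2 \;\le\; \log^2 X \sum_{X/2 < p \le X} \tau(p+2)^2 \;\ll\; X \log^5 X.
\]

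For the off-diagonal contribution, the hypothesis $c > 1$ is essential. The mean value theorem applied to $x^c$ on $(X/2, X]$ gives $|p_1^c - p_2^c| \gg X^{c-1}|p_1 - p_2|$ for distinct primes in that range, so the off-diagonal sum is at most
\[
X^{1-c} \sum_{p_1 \ne p_2} \frac{|c_{p_1}||c_{p_2}|}{|p_1 - p_2|} \;\ll\; X^{1-c} \log X \sum_{p} |c_p|^2 \;\ll\; X^{2-c} \log^6 X,
\]
where I applied $2|c_{p_1}||c_{p_2}| \le |c_{p_1}|^2 + |c_{p_2}|^2$ and used the harmonic bound $\sum_{p_2 \ne p_1,\; X/2 < p_2 \le X} |p_1 - p_2|^{-1} \ll \log X$. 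Because $X^{1-c}\log X \to 0$ for any fixed $c > 1$, this is strictly of smaller order than the diagonal, so the diagonal bound $X\log^5 X$ stands.

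The only real subtlety I anticipate is the clean reduction of the sieve weights to the pointwise divisor bound $|c_p| \le \tau(p+2)\log p$; once this is available, the argument is entirely classical, needing only the second moment of $\tau$ on a shifted progression for the diagonal and the elementary inequality $|p_1^c - p_2^c| \gg X^{c-1}|p_1 - p_2|$ for the off-diagonal. No further sieve input (beyond $|\lambda^\pm(d)| \le 1$) is required.
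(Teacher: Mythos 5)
The paper does not prove this lemma itself; it cites Lemma~11 of Tolev's preprint. Your proof is a correct, self-contained argument and it is the standard one for such a second-moment estimate. The rearrangement of $L_j$ into $\sum_p c_p\, e(p^c t)$ is legitimate, and the pointwise bound $|c_p|\le\tau(p+2)\log p$ follows at once from $|\lambda^\pm(d)|\le1$ and the fact that the $d$ appearing are distinct divisors of $p+2$. The diagonal/off-diagonal split via $\bigl|\int_0^1 e(\beta t)\,dt\bigr|\le\min\bigl(1,(\pi|\beta|)^{-1}\bigr)$ is sound; the diagonal is handled by extending the sum from primes to all integers and invoking $\sum_{m\le X}\tau(m)^2\ll X\log^3X$, which indeed gives $X\log^5X$ after the $(\log X)^2$ factor; and the off-diagonal reduction via $|p_1^c-p_2^c|\gg X^{c-1}|p_1-p_2|$ (valid on $(X/2,X]$ by the mean value theorem since $c>1$), AM--GM, and the harmonic bound $\sum_{1\le k\le X/2}k^{-1}\ll\log X$ yields $X^{2-c}\log^6X$, which is $o(X\log^5X)$ for any fixed $c>1$. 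All steps are correct and the conclusion follows. The only caveat is that, since the source lemma is only cited here, I cannot confirm whether Tolev's own argument runs along identical lines; but your proof independently establishes the stated bound and requires no sieve input beyond $|\lambda^\pm(d)|\le1$.
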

\begin{proof} See (\cite{Tolev2} , Lemma 11).
\end{proof}

\begin{lemma}\label{summin}
Let $1<c<1603/1033$ . Then
\begin{equation*}
\sum\limits_{X/2<n_1, n_2, n_3, n_4\leq X}
\min\left(1, \frac{1}{|n_1^c+n_2^c-n_3^c-n_4^c|}\right)\ll X^{4-c}\log^5X\,.
\end{equation*}
\end{lemma}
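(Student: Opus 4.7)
My plan is to decompose the sum according to the sign pattern of the differences $n_1-n_3$ and $n_4-n_2$, linearise each power $n^c$ by the mean value theorem, and treat each region on its own.

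First, the diagonal $n_1=n_3$, $n_2=n_4$ contributes at most $X^2$, which is $\ll X^{4-c}$ since $c<2$.

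Second, consider the case where the two differences have the same sign (or one of them vanishes). By symmetry I may assume $n_1\ge n_3$ and $n_2\ge n_4$ with $(m,\ell):=(n_1-n_3,n_2-n_4)\ne(0,0)$. The mean value theorem gives
\[
|n_1^c+n_2^c-n_3^c-n_4^c|=(n_1^c-n_3^c)+(n_2^c-n_4^c)\gg X^{c-1}(m+\ell).
\]
Since $c>1$, the right-hand side exceeds $1$ whenever $m+\ell\ge1$, so the minimum is $\ll 1/(X^{c-1}(m+\ell))$. Summing over the $\ll X^2$ choices of $(n_3,n_4)$ and over pairs $(m,\ell)$ with $m+\ell=j$ (of which there are $j+1$) yields
\[
\ll X^2\cdot X^{1-c}\sum_{j=1}^{2X}\frac{j+1}{j}\ll X^{4-c},
\]
which is within the target.

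The bulk of the work is the opposite-sign case, say $n_1>n_3$ and $n_4>n_2$, with $m=n_1-n_3\ge1$ and $k=n_4-n_2\ge1$. Taylor expanding each power,
\[
n_1^c+n_2^c-n_3^c-n_4^c=c\bigl(mn_3^{c-1}-kn_2^{c-1}\bigr)+O\bigl((m^2+k^2)X^{c-2}\bigr),
\]
and the linear part may cancel. I would split according to whether $m=k$ or not. If $m=k$ the main term equals $cm(n_3^{c-1}-n_2^{c-1})\asymp m|n_3-n_2|X^{c-2}$, and a dyadic decomposition of $|n_3-n_2|$ combined with $\sum_m 1/m\ll\log X$ gives a contribution $\ll X^{3-c}\log^2 X$. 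If $m\ne k$, then for fixed $m,k,n_2$ the condition $|mn_3^{c-1}-kn_2^{c-1}|\le T$ restricts $n_3$ to an interval of length $\asymp TX^{2-c}/m$; a dyadic sum over $T$, summed against the harmonic factors $\sum_{m,k\le X}1/(mk)\ll\log^2 X$ and the $\ll X^2$ outer choices of $(n_2,n_4)$, should yield the remaining contribution $\ll X^{4-c}\log^5 X$.

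The main obstacle is this final subcase: one must control the quadratic Taylor remainder uniformly against the linear gain in the regime where $mn_3^{c-1}$ and $kn_2^{c-1}$ nearly cancel. Balancing the error $(m^2+k^2)X^{c-2}$ against the main linear term, and keeping the counting tight enough to save all logarithmic factors, is precisely where the numerical restriction $c<1603/1033$ is forced.
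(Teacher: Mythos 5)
The paper does not actually prove this lemma; it simply cites Theorem~1 of Zhai and Cao, whose proof rests on exponential-sum estimates (van der Corput / exponent-pair methods). Your proposal attempts a genuinely elementary alternative, and the first three regions you treat (the diagonal, the same-sign region, and the case $n_1-n_3=n_4-n_2$) are handled correctly. But the opposite-sign case with $m=n_1-n_3\neq n_4-n_2=k$ is not merely an unresolved technicality — it is where your method breaks down in a way that cannot be repaired by sharper Taylor bookkeeping.

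The concrete obstruction is the ``$+1$'' in your interval count. Fixing $m,k,n_2$ and asking for which $n_3$ the quantity $|mn_3^{c-1}-kn_2^{c-1}|$ lies below a threshold $T$ does confine $n_3$ to an interval of length $\asymp TX^{2-c}/m$, but the number of integers in that interval is $\ll 1+TX^{2-c}/m$, and summing the ``$+1$'' (weighted by $\min(1,1/T)$ over dyadic $T$ and over the $\ll X^{3}$ choices of $(m,k,n_2)$, or equivalently of $(n_2,n_3,n_4)$) already produces a contribution $\gg X^{3}\log X$. Since $c>1$ implies $X^{3}>X^{4-c}$, this swamps the target. Said differently: for each of the $X^3$ triples $(n_2,n_3,n_4)$ there is trivially at most one $n_1$ with $|n_1^c+n_2^c-n_3^c-n_4^c|\le 1$, but your argument gives no way to show that for most triples there is \emph{none}; that is an equidistribution statement about $n^c \pmod 1$ and requires an exponential-sum input, which is exactly what Zhai--Cao supply. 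Your closing speculation that the threshold $c<1603/1033$ arises from balancing the quadratic Taylor remainder against the linear term is therefore not correct: a constant of that shape is the fingerprint of an exponent-pair or van der Corput estimate, not of an elementary error balance, and no amount of care with the $O\bigl((m^2+k^2)X^{c-2}\bigr)$ term will recover it.
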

\begin{proof} See (\cite{Zhai-Cao}, Theorem 1).
\end{proof}

\begin{lemma}\label{intL4}
For the sums denoted by (\ref{L1}) and (\ref{L2}) we have
\begin{equation*}
\int\limits_0^1|L_j(t,X)|^4dt\ll X^{4-c+\eta}\,,\;\; j=1,2,
\end{equation*}
where $\eta$ is defined by (\ref{eta}).
\end{lemma}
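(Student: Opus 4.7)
The plan is a standard fourth-moment expansion, reducing the problem to the counting bound of Lemma \ref{summin}. First I would write $L_j(t,X) = \sum_n a_n e(n^c t)$, where $a_n$ packages the log-weight, the primality indicator for $X/2 < n \le X$, and the inner Rosser-weighted divisor sum $\sum_{d \mid (n+2),\, d \mid P(z)} \lambda^{\pm}(d)$. Since $|\lambda^{\pm}(d)| \le 1$, we have $|a_n| \le \tau(n+2) \log n$, which by Lemma \ref{tauest} is $\ll X^{\varepsilon} \log X$ for any small $\varepsilon > 0$.

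Next, I would expand $|L_j|^4 = L_j \cdot L_j \cdot \overline{L_j} \cdot \overline{L_j}$ and interchange the $t$-integration with the fourfold sum over $(n_1,n_2,n_3,n_4)$. The inner integral $\int_0^1 e\bigl(t(n_1^c + n_2^c - n_3^c - n_4^c)\bigr)\,dt$ is bounded by $\min\bigl(1, |n_1^c + n_2^c - n_3^c - n_4^c|^{-1}\bigr)$ via the trivial estimate for $\int_0^1 e(\xi t)\,dt$. Hence $\int_0^1 |L_j(t,X)|^4\,dt$ is $\ll X^{4\varepsilon} \log^4 X$ times the sum $\sum_{X/2 < n_i \le X} \min\bigl(1, |n_1^c+n_2^c-n_3^c-n_4^c|^{-1}\bigr)$.

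Finally, I would invoke Lemma \ref{summin}: since $c < 832/825 < 1603/1033$, its hypothesis is satisfied and the counting sum is $\ll X^{4-c} \log^5 X$. Choosing $\varepsilon$ small enough relative to $\eta = \varepsilon_0/9$, the accumulated factor $X^{4\varepsilon} \log^9 X$ is comfortably absorbed by the slack $X^{\eta}$, giving the claimed bound $\int_0^1 |L_j(t,X)|^4\,dt \ll X^{4-c+\eta}$. No substantial obstacle arises here: once Lemmas \ref{tauest} and \ref{summin} are in hand, the argument is essentially bookkeeping. The only point worth noting is that discarding the sign structure of the Rosser weights $\lambda^{\pm}(d)$ and majorising by $\tau(n+2)$ is wasteful in principle, but the cost $X^{\varepsilon}$ is negligible compared with the built-in slack $X^{\eta}$, so this crude step suffices.
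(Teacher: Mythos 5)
Your argument is correct and follows the same route as the paper: expand the fourth power, bound the inner integral $\int_0^1 e(t\,(n_1^c+n_2^c-n_3^c-n_4^c))\,dt$ by $\min(1,|n_1^c+n_2^c-n_3^c-n_4^c|^{-1})$, majorize the Rosser-weighted coefficients by $\tau(n+2)\log X$ using $|\lambda^{\pm}(d)|\le1$ plus Lemma~\ref{tauest}, and then invoke Lemma~\ref{summin} (the condition $c<832/825<1603/1033$ holds), absorbing the $X^{O(\varepsilon)}\log^{O(1)}X$ losses into the slack $X^{\eta}$. The paper carries out the same manipulations slightly more explicitly (writing out the swap of summation over $d_i$ and $n_i$ before bounding $\sum_{d\le D,\,d\mid n+2}1\le\tau(n+2)$), but there is no substantive difference.
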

\begin{proof}
We only prove for $j=1$. The case for $j=2$  is analogous.

From (\ref{Rosser}), (\ref{L1}), Lemma \ref{tauest} and Lemma \ref{summin}
it follows
\begin{align*}
&\int\limits_{0}^{1}|L_1(t,X)|^4dt=\\
&=\sum\limits_{d_i|P(z)\atop{i=1,2,3,4}}\lambda^-(d_1)\cdots\lambda^-(d_4)
\sum\limits_{X/2<p_1,p_2,p_3,p_4\leq X\atop{p_i+2\equiv0\,(d_i), i=1,2,3,4}}\log p_1\cdots\log p_4
\int\limits_{0}^{1}e((p_1^c+p_2^c-p_3^c-p_4^c)t)dt\\
&\ll\sum\limits_{d_i\leq D\atop{i=1,2,3,4}}
\sum\limits_{X/2<p_1,p_2,p_3,p_4\leq X\atop{p_i+2\equiv0\,(d_i), i=1,2,3,4}}\log p_1\cdots\log p_4
\min\bigg(1,\frac{1}{|p_1^c+p_2^c-p_3^c-p_4^c|}\bigg)\\
&\ll(\log X)^4\sum\limits_{d_i\leq D\atop{i=1,2,3,4}}
\sum\limits_{X/2<n_1,n_2,n_3,n_4\leq X\atop{n_i+2\equiv0\,(d_i), i=1,2,3,4}}
\min\bigg(1,\frac{1}{|n_1^c+n_2^c-n_3^c-n_4^c|}\bigg)\\
&=(\log X)^4\sum\limits_{X/2<n_1,n_2,n_3,n_4\leq X}
\min\bigg(1,\frac{1}{|n_1^c+n_2^c-n_3^c-n_4^c|}\bigg)
\sum\limits_{d_1\leq D\atop{d_1|n_1+2}}1\cdots
\sum\limits_{d_4\leq D\atop{d_4|n_4+2}}1\\
&\ll(\log X)^4\sum\limits_{X/2<n_1,n_2,n_3,n_4\leq X}
\min\bigg(1,\frac{1}{|n_1^c+n_2^c-n_3^c-n_4^c|}\bigg)
\tau(n_1+2)\cdots\tau(n_4+2)
\end{align*}
\begin{align*}
&\ll X^{\eta/2}\sum\limits_{X/2<n_1,n_2,n_3,n_4\leq X}
\min\bigg(1,\frac{1}{|n_1^c+n_2^c-n_3^c-n_4^c|}\bigg)\\
&\ll X^{4-c+\eta}\,.
\end{align*}
\end{proof}

\begin{lemma}\label{LXest}
Assume that $\tau\leq|\alpha|\leq K$. Let $\beta(d)$ be complex number
defined for $d\leq D$, and let
\begin{equation}\label{beta}
\beta(d)\ll1.
\end{equation}
Then for the sum
\begin{equation}
L(\alpha,X)=\sum\limits_{d\leq D}\beta(d)\sum\limits_{X/2<p\leq X\atop{p+2\equiv0\,(d)}}e(p^c\alpha)\log p
\end{equation}
we have
\begin{equation*}
L(\alpha,X)\ll X^\eta\left(X^{1/3+c/2}DK^{1/2}+X^{3/4+c/6}D^{2/3}K^{1/6}+X^{1-c/6}D^{1/3}\tau^{-1/6}\right)\,,
\end{equation*}
where $\eta$ is defined by (\ref{eta}).
\end{lemma}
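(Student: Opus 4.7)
\smallskip

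The plan is to decompose $L(\alpha,X)$ into bilinear sums via a combinatorial identity (Vaughan or Heath--Brown), then estimate Type I pieces by exponential integral bounds (Lemma \ref{Iest}) and Type II pieces by Cauchy--Schwarz together with van der Corput / Weyl shifts, all while carrying the extra arithmetic coefficient $\beta(d)$ and the divisibility condition $p+2\equiv 0\,(d)$.

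First I would remove the factor $\log p$ by partial summation and extend to a sum of $\Lambda(n) e(n^c\alpha)$ over $n$, at the cost of a negligible contribution from prime powers. Swapping the order of summation, the sum becomes
$$
L(\alpha,X)\ll (\log X)\max_{X/2<Y\le X}\Bigl|\sum_{d\le D}\beta(d)\sum_{\substack{X/2<n\le Y\\ n\equiv -2\,(d)}}\Lambda(n)e(n^c\alpha)\Bigr|,
$$
and I would apply Vaughan's identity at parameters $U,V$ (to be optimized at the end) to split $\Lambda(n)$ into a bounded number of Type I sums
$$
\Sigma_I=\sum_{m\le M}a_m\sum_{\substack{X/(2m)<k\le X/m\\ mk\equiv -2\,(d)}}e((mk)^c\alpha)
$$
and Type II sums
$$
\Sigma_{II}=\sum_{U<m\le M}\sum_{k\asymp X/m}a_mb_k\,\mathbf{1}_{mk\equiv -2\,(d)}\,e((mk)^c\alpha),
$$
with $|a_m|,|b_k|\ll\tau(m)\log X$, so that Lemma \ref{tauest} keeps any divisor losses inside the tolerable factor $X^\eta$.

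For the Type I contribution, for fixed $m,d$ the inner sum runs over an arithmetic progression of length $\asymp X/(md)$; applying Abel summation and then Lemma \ref{Iest} to $k\mapsto \alpha(mk)^c$ (whose second derivative has size $|\alpha|m^2(mk)^{c-2}\asymp |\alpha|m^cX^{c-2}$) gives an inner bound of order $|\alpha|^{1/2}X^{c/2}d^{-1}+1$. Summing over $m\le M$ and $d\le D$, and using $|\alpha|\le K$, yields a bound of shape $X^\eta(MDK^{1/2}X^{c/2}+MD)$, which is absorbed into the first term $X^{1/3+c/2}DK^{1/2}$ once $M$ is chosen around $X^{1/3}$.

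For the Type II piece, I would apply Cauchy--Schwarz in the $m$--variable (after grouping the divisibility by $d$ with $k$), square out, and reduce to estimating
$$
\sum_{k_1,k_2}\sum_{m}e\bigl(\alpha m^c(k_1^c-k_2^c)\bigr),
$$
with a twist coming from the congruences. The inner sum in $m$ is then handled by van der Corput's $k$-th derivative test (applied via Lemma \ref{Iest} after a Weyl--van der Corput $A$-process): the order $k=2$ gives the term with $K^{1/6}$ and the exponent $3/4+c/6$, while the non-oscillatory regime where the phase derivative falls below the trivial threshold contributes the third term $X^{1-c/6}D^{1/3}\tau^{-1/6}$, using the lower bound $|\alpha|\ge\tau$ to control it. Balancing the parameters $U,V$ and the length $M$ so that the Type I bound matches the Type II bounds produces exactly the three terms in the statement.

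The main obstacle is the bookkeeping: choosing the Vaughan cutoffs, the van der Corput shift length, and the order of the derivative test so that all bilinear contributions end up dominated by one of the three displayed shapes, while keeping the total divisor and logarithmic losses inside the $X^\eta$ factor (this is where the choice $D=X^{1/11-\varepsilon_0}$ and $\eta=\varepsilon_0/9$ from \eqref{D}, \eqref{eta} become relevant). Once these parameters are pinned down, the bound follows by taking the maximum of the three contributions, and the treatment of $L_1$ and $L_2$ is identical since both only use $|\lambda^\pm(d)|\le 1$.
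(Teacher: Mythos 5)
The paper does not actually prove this lemma; it cites (\cite{Tolev2}, Lemma~15) and stops. So there is no in-paper argument to line your proposal up against, and your high-level plan (combinatorial decomposition into Type~I and Type~II sums, van der Corput on the phase $t\mapsto\alpha t^c$, divisor losses hidden in $X^\eta$) is the standard template that Tolev's Lemma~15 surely follows. At that level of description the strategy is sound.

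There are, however, genuine gaps and errors in the way you carry it out. In the Type~I estimate you parameterize the progression $mk\equiv -2\,(d)$ by an interval of length $\asymp X/(md)$ but then keep the second derivative ``in $k$''. Writing $k=dk'+a_d$, the relevant second derivative is in $k'$ and has size $\asymp|\alpha|m^2d^2X^{c-2}$, not $\asymp|\alpha|m^cX^{c-2}$ as you wrote (the exponent on $m$ is $2$, not $c$, and there is an extra $d^2$). The second-derivative test then gives an inner bound $\ll|\alpha|^{1/2}X^{c/2}+X^{1-c/2}/(md|\alpha|^{1/2})$, i.e.\ with \emph{no} $d^{-1}$ in the first term, contrary to your displayed $|\alpha|^{1/2}X^{c/2}d^{-1}+1$. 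Your subsequent claim that summation over $m\le M,\ d\le D$ yields $\ll MDK^{1/2}X^{c/2}$ is the correct outcome of the \emph{corrected} inner bound, but it is not what follows from the inner bound you stated (which would only give $\ll MK^{1/2}X^{c/2}\log D$), so the intermediate step and the conclusion are mutually inconsistent. More seriously, the Type~II treatment is only gestured at: after Cauchy--Schwarz in $m$ the squared coefficient is $\sum_{d_1,d_2\le D}\beta(d_1)\overline{\beta(d_2)}\mathbf{1}_{mk_1\equiv -2\,(d_1)}\mathbf{1}_{mk_2\equiv -2\,(d_2)}b_{k_1}\overline{b_{k_2}}$, and extracting the displayed powers $D^{2/3}$ and $D^{1/3}$ requires a careful dissection of the simultaneous congruences (grouping by $(d_1,d_2)$ or $[d_1,d_2]$) that the sketch omits entirely; likewise the $A$-process / exponent-pair choices that produce the exact fractions $1/3$, $3/4$, $c/6$ are never specified, so ``balancing the parameters'' is an assertion rather than an argument. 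None of this shows the approach cannot work, but as written the proposal does not establish the stated bound and contains at least one computational error that must be fixed.
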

\begin{proof} See (\cite{Tolev2}, Lemma 15).
\end{proof}

We next treat $\Gamma_{1}^{(2)}$, defined by (\ref{Gamma1,2}).
We have
\begin{equation}\label{Gamma1,2est1}
\Gamma_{1}^{(2)}\ll\max\limits_{\tau\leq t\leq K}|L_1(t,X)|\int\limits_{\tau}^{K}|\Theta(t)||L_2(t,X)|^3dt\,.
\end{equation}
Using Cauchy's inequality we obtain
\begin{equation}\label{Cauchyinequality}
\int\limits_{\tau}^{K}|\Theta(t)||L_2(t,X)|^3dt
\ll\left(\int\limits_{\tau}^{K}|\Theta(t)||L_2(t,X)|^2dt\right)^{1/2}
\left(\int\limits_{\tau}^{K}|\Theta(t)||L_2(t,X)|^4dt\right)^{1/2}\,.
\end{equation}
On the one hand from (\ref{eps}), (\ref{K}), Lemma \ref{Fourier} and Lemma \ref{intL} it follows
\begin{align}\label{midint1}
\int\limits_{\tau}^{K}|\Theta(t)||L_2(t,X)|^2dt
&\ll\vartheta\int\limits_{\tau}^{1/\vartheta}|L_2(t,X)|^2dt
+\int\limits_{1/\vartheta}^{K}|L_2(t,X)|^2\frac{dt}{t}\nonumber\\
&\ll\vartheta\sum\limits_{0\leq n\leq1/\vartheta}\int\limits_{n}^{n+1}|L_2(t,X)|^2dt
+\sum\limits_{1/\vartheta-1\leq n\leq K}\frac{1}{n}\int\limits_{n}^{n+1}|L_2(t,X)|^2dt\nonumber\\
&\ll X\log^6X\,.
\end{align}
On the other hand (\ref{eps}), (\ref{K}), Lemma \ref{Fourier} and Lemma \ref{intL4} give us
\begin{align}\label{midint2}
\int\limits_{\tau}^{K}|\Theta(t)||L_2(t,X)|^4dt
&\ll\vartheta\int\limits_{\tau}^{1/\vartheta}|L_2(t,X)|^4dt
+\int\limits_{1/\vartheta}^{K}|L_2(t,X)|^4\frac{dt}{t}\nonumber\\
&\ll\vartheta\sum\limits_{0\leq n\leq1/\vartheta}\int\limits_{n}^{n+1}|L_2(t,X)|^4dt
+\sum\limits_{1/\vartheta-1\leq n\leq K}\frac{1}{n}\int\limits_{n}^{n+1}|L_2(t,X)|^4dt\nonumber\\
&\ll X^{4-c+\eta}\log X\,,
\end{align}
where $\eta$ is defined by (\ref{eta}).\\
Therefore by  (\ref{tau}) -- (\ref{eta}), (\ref{Gamma1,2est1}) -- (\ref{midint2}) and by Lemma \ref{LXest} we obtain
\begin{equation}\label{Gamma1,2est2}
\Gamma_1^{(2)}\ll\vartheta\frac{X^{4-c}}{\log^5X}\,.
\end{equation}
Summarizing (\ref{Gama1}), (\ref{Gamma1,3est}), (\ref{Ga1,1}) and (\ref{Gamma1,2est2}) we find
\begin{equation}\label{Gama1est}
\Gamma_1=B(X)G^-(G^+)^3+\mathcal{O}\bigg( \vartheta \frac{ X^{4-c}}{\log^5X}\bigg)\,.
\end{equation}
\section{Proof of the Theorem.}
\indent

Since $\Gamma_1=\Gamma_2=\Gamma_3=\Gamma_4$   and $\Gamma_5$  is estimated in the same way then from (\ref{Gamma}), (\ref{Gammaest1}), (\ref{Gammawaveest}), (\ref{Gamma0}) and (\ref{Gama1est}) we obtain
\begin{equation}\label{Gammaest2}
\Gamma\geq B(X)W+\mathcal{O}\bigg( \vartheta\frac{X^{4-c}}{\log^5X}\bigg)\,,
\end{equation}
where
\begin{equation}\label{WGamma}
W=4\left(G^+\right)^3\bigg(G^--\frac{3}{4}G^+\bigg )\,.
\end{equation}
We put
\begin{equation}\label{Fzs}
\mathcal{F}(z)=\prod\limits_{2<p\le z}\bigg(1-\frac{1}{p-1}\bigg)\,,\quad
s=\frac{\log D}{\log z}\,.
\end{equation}
Let $f(s)$ and $F(s)$ are the lower and the upper functions of the linear sieve.
Using (\ref{Gpm}) and (\cite{Bru}, Lemma 10) we obtain
\begin{align}\label{G_iF_i}
  \mathcal{F}(z)\bigg (f(s)+&\mathcal{O}\big((\log X)^{-1/3}\big)\bigg )\nonumber\\
  &\quad\quad\quad\le G^{-} \le \mathcal{F}(z)
  \le G^{+}\nonumber\\
  &\quad\quad\quad\quad\quad\quad\quad\le \mathcal{F}(z)\bigg (F(s)+\mathcal{O}\left((\log X)^{-1/3}\right)\bigg )\,.
\end{align}
To estimate $W$ from below we shall use the inequalities (see (\ref{G_iF_i})):
\begin{align}
\begin{split}\label{G1theta1}
   G^--\frac{2}{3}G^+&\geq \mathcal{F}(z)\bigg (f(s)-\frac{3}{4}F(s)+\mathcal{O}\big((\log X)^{-1/3}\big)\bigg )\\
   G^+&\geq \mathcal{F}(z)\,.
\end{split}
\end{align}
Then from (\ref{WGamma}) and (\ref{G1theta1}) it follows
\begin{equation}\label{WGammaest}
W\geq4\mathcal{F}^4(z)\bigg (f(s)-\frac{3}{4}F(s)+\mathcal{O}\big((\log X)^{-1/3}\big)\bigg)\,.
\end{equation}
Hence, using (\ref{Gammaest2}) and (\ref{WGammaest}) we get
\begin{equation}\label{Gammaest3}
\Gamma\geq4B\mathcal{F}^4(z)\bigg(f(s)-\frac{2}{3}F(s)+\mathcal{O}\big((\log X)^{-1/3}\big)\bigg)+\mathcal{O}\bigg( \vartheta\frac{X^{4-c}}{\log^5X}\bigg)\,.
\end{equation}
For $2\leq s\leq3$ we have
\begin{equation*}
f(s)=\frac{2e^\gamma\log(s-1)}{s}\,, \qquad F(s)=\frac{2e^\gamma}{s}
\end{equation*}
($\gamma$ denotes Euler's constant).
We choose
\begin{equation*}
s=2.95.
\end{equation*}
Then by (\ref{z}), (\ref{D}) and (\ref{Fzs}) we find
\begin{equation*}
\beta=0.030477\,.
\end{equation*}
It is not difficult to compute that for sufficiently large $X$ we have
\begin{equation}\label{ineq}
f(s)-\frac{2}{3}F(s)>10^{-5}.
\end{equation}
It remains to notice that
\begin{equation}\label{Fzest}
\mathcal{F}(z)\asymp\frac{1}{\log X}\,.
\end{equation}
Therefore, using (\ref{z}), (\ref{estB}), (\ref{Gammaest3}) -- (\ref{Fzest}) we obtain
\begin{equation}\label{final}
\Gamma\gg\vartheta\frac{X^{4-c}}{\log^4X}.
\end{equation}
From (\ref{eps}) and (\ref{final}) it follows that $\Gamma \rightarrow\infty$ as $X\rightarrow\infty$.

Bearing in mind (\ref{eps}), (\ref{Gamma}) and (\ref{final}) we conclude that for some
constant $c_0>0$ there are at least $c_0X^{4-c}\log^{-A-9}X$ triples of primes $p_1, p_2, p_3$
satisfying $X/2< p_1, p_2, p_3,p_4\le X,\;|p_1^c+p_2^c+p_3^c+p_4^c-N|<\vartheta$
and such that for every prime
factor $p$ of  $p_j + 2,\,j=1,2,3,4$ we have $ p\geq X^{0.030477}$.

The proof of the Theorem is complete.

\vskip20pt
\footnotesize
\begin{flushleft}
S. I. Dimitrov\\
Faculty of Applied Mathematics and Informatics\\
Technical University of Sofia \\
8, St.Kliment Ohridski Blvd. \\
1756 Sofia, BULGARIA\\
e-mail: sdimitrov@tu-sofia.bg\\
\end{flushleft}

\begin{thebibliography}{}
\bibitem{Baker-Weingartner} R. Baker, A. Weingartner,
{\it A ternary diophantine inequality over primes},
Acta Arith., {\bf162}, (2014), 159 -- 196.

\bibitem{Bru} J. Br\"{u}dern, E. Fouvry, {\it Lagrange's Four Squares Theorem with almost prime
variables}, J. Reine Angew. Math., {\bf 454}, (1994), 59 -- 96.

\bibitem{Cai} Y. Cai, {\it On a diophantine inequality involving prime numbers} (in Chinese),
Acta Math Sinica, {\bf39}, (1996), 733 -- 742.

\bibitem{Dimitrov}S. I. Dimitrov, {\it Studying diophantine inequalities and arithmetical progressions using number theory methods},
 Thesis, Technical University - Sofia, (2016), (in Bulgarian).

\bibitem{Iwa1} H. Iwaniec, {\it Rosser's sieve}, Acta Arith., {\bf 36}, (1980), 171 -- 202.

\bibitem{Iwa2} H. Iwaniec, {\it A new form of the error
term in the linear sieve}, Acta Arith., {\bf 37}, (1980), 307 -- 320.

\bibitem{Ku-Ne} A. Kumchev, T. Nedeva,
{\it On an equation with prime numbers}, Acta Arith., {\bf 83},
(1998), 117 -- 126.

\bibitem{Kumchev} A. Kumchev, {\it A diophantine inequality involving prime powers},
Acta Arith., {\bf 89}, (1999), 311 -- 330.

\bibitem{Mu} Q. Mu, {\it On a diophantine inequality over primes},
Adv. Math. (China), {\bf 44}(4), (2015), 621 -- 637.

\bibitem{Shapiro} I. I. Piatetski-Shapiro, {\it On a variant of the Waring-Goldbach problem},
Mat. Sb., {\bf30}, (1952), 105 -- 120, (in Russian).

\bibitem{Segal} B. Segal, {\it On a theorem analogous to Waring's theorem},
Dokl. Akad. Nauk SSSR (N. S.), {\bf2}, (1933), 47 -- 49, (in Russian).

\bibitem{Titchmarsh}E. Titchmarsh, {\it The Theory of the Riemann Zeta-function}
(revised by D. R. Heath-Brown), Clarendon Press, Oxford (1986).

\bibitem{Tolev1} D. Tolev, {\it On a diophantine inequality
involving prime numbers}, Acta Arith., {\bf 61}, (1992), 289 -- 306.

\bibitem{Tolev2} D. Tolev, {\it On a diophantine inequality with prime
numbers of a special type}, arXiv:1701.07652v1 [math.NT].

\bibitem{Zhai-Cao} W. Zhai, X. Cao, {\it On a diophantine inequality over primes},
Adv. Math. (China), {\bf 32}(1), (2003), 63 -- 73.

\end{thebibliography}
\end{document}